\theoremstyle{plain}
\newtheorem{theorem}{Theorem}[section]
\newtheorem{lemma}[theorem]{Lemma}
\newtheorem{problem}[theorem]{Problem}
\theoremstyle{remark}
\newtheorem{remark}[theorem]{Remark}
\newtheorem*{acknowledgment}{Acknowledgment}
\numberwithin{equation}{section}
\newcommand{\seclabel}[1]{\label{sec:#1}}   
\newcommand{\thmlabel}[1]{\label{thm:#1}}   
\newcommand{\lemlabel}[1]{\label{lem:#1}}   
\newcommand{\eqnlabel}[1]{\label{eqn:#1}}   
\newcommand{\secref}[1]{\ref{sec:#1}}   
\newcommand{\thmref}[1]{\ref{thm:#1}}   
\newcommand{\lemref}[1]{\ref{lem:#1}}   
\newcommand{\eqnref}[1]{\eqref{eqn:#1}} 
\newcommand{\by}[1]{\overset{\eqnref{#1}}=}  
\newcommand{\byx}[1]{\overset{#1}=}          
\newcommand{\bck}{\rightarrow}
\newcommand{\Mace}{\textsc{Mace4}}             
\newcommand{\Prover}{\textsc{Prover9}}         
\title{Short Equational Bases for MV-Algebras, Commutative BCK-Algebras
and {\L}BCK-algebras}
\author{Jo\~{a}o Ara\'{u}jo$^*$}
\author{Michael Kinyon}
\author{Edgar Vig\'{a}rio}
\address[Ara\'{u}jo]
{Universidade Aberta
and
Centro de \'{A}lgebra \\
Universidade de Lisboa \\
1649-003 Lisboa \\ Portugal}
\email{\url{jaraujo@ptmat.fc.ul.pt}}
\address[Kinyon]{Department of Mathematics \\
University of Denver \\ 2360 S Gaylord St \\ Denver, Colorado 80208 USA}
\email{\url{mkinyon@math.du.edu}}
\address[Vig\'{a}rio]
{Universidade Aberta \\
R. Escola Polit\'{e}cnica, 147\\
1269-001 Lisboa \\
Portugal}
\thanks{${}^*$Partially supported by FCT and FEDER,
Project POCTI-ISFL-1-143 of Centro de Algebra da Universidade de Lisboa,
and by FCT and PIDDAC through the project PTDC/MAT/69514/2006.}
\subjclass[2010]{03G25}
\keywords{MV-algebra, commutative BCK-algebra, {\L}BCK-algebra, equational bases}
\begin{document}

\maketitle

\begin{abstract}
We show that the variety of MV-algebras is $2$-based and we offer
elegant $2$-bases for the varieties of commutative BCK-algebras
and {\L}BCK-algebras.
\end{abstract}

\section{Introduction}

In this paper, we offer short equational bases for three varieties of algebras closely
related to logic, namely MV-algebras, commutative BCK-algebras and {\L}BCK-algebras. This work is in the
same spirit of many other papers in which the general aim is provide simple systems
of identities for various structures, where simplicity is roughly measured
by the number of identities, or the number of symbols used, or the length of the identities
or combinations of these criteria.
We refer the interested reader to the extensive bibliography of \cite{AM}.

\emph{MV-algebras}, which are algebraic counterparts of {\L}ukasiewicz logic, are
algebras $(A,\oplus,\neg,0)$ of type $\langle 2,1,0\rangle$ satisfying the following
identities
\begin{align*}
(x\oplus y)\oplus z &= x\oplus (y\oplus z)  \tag{A1} \\
x\oplus y &= y\oplus x  \tag{A2} \\
x\oplus 0 &= x  \tag{A3} \\
\neg \neg x &= x \tag{A4} \\
x\oplus \neg 0 &= \neg 0  \tag{A5} \\
\neg(\neg x\oplus y)\oplus y &= \neg(\neg y\oplus x)\oplus x \tag{A6}
\end{align*}
The preceding definition comes from \cite{COM}. Other definitions exist using more operations,
but these are definable in terms of $0$, $\neg$ and $\oplus$.
Since it turns out that
$0 = \neg (\neg x \oplus x)$, the constant $0$ can be removed from the signature
of an MV-algebra, and the identities above can be appropriately modified. Thus
MV-algebras can also be viewed as algebras of type $\langle 2,1\rangle$.

Cattaneo and Lombardo gave a system of five independent axioms in terms of
$0$, $\neg$ and $\oplus$ for MV-algebras \cite{CL}.
Our first main result is that the variety of MV-algebras (as algebras of
type $\langle 2,1\rangle$) is $2$-based.

\begin{theorem}
\thmlabel{MV}
The following identities form a basis for the variety of MV-algebras:
\begin{align*}
\neg (x \oplus (\neg x \oplus y)) \oplus z &= z \tag{M1} \\
\neg (\neg (x \oplus y) \oplus \neg (z \oplus u)) \oplus
\neg (z \oplus (u \oplus \neg (x \oplus (y \oplus (z \oplus u)))))
&= x \oplus y\,. \tag{M2}
\end{align*}
\end{theorem}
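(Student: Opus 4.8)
\emph{Plan.} The theorem asserts an equivalence of axiom systems, so I would prove it in two directions. In the type-$\langle 2,1\rangle$ presentation the constant $0$ is replaced throughout by $\neg(\neg x\oplus x)$; write $\Sigma$ for the resulting rewritten form of (A1)--(A6). The goal is that $\Sigma$ and the pair (M1), (M2) entail each other.

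\emph{First direction (MV-algebras satisfy (M1) and (M2)).} This is routine. In any MV-algebra one has the standard identities $\neg\neg x = x$, $x\oplus\neg x = \neg 0$ and $x\oplus\neg 0 = \neg 0$, so by associativity $x\oplus(\neg x\oplus y) = (x\oplus\neg x)\oplus y = \neg 0\oplus y = \neg 0$, whence $\neg(x\oplus(\neg x\oplus y))\oplus z = 0\oplus z = z$, which is (M1). Identity (M2) is a similar but longer computation: associativity collapses the nested sums (for instance $x\oplus(y\oplus(z\oplus u)) = (x\oplus y)\oplus(z\oplus u)$), and then standard identities relating $\oplus$ and $\neg$ reduce both sides to $x\oplus y$. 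Since this is a finite equational deduction inside the variety, it can instead be handed to \Prover{} with hypotheses (A1)--(A6).

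\emph{Second direction (models of (M1), (M2) are MV-algebras).} This is the substantial half; let $(A,\oplus,\neg)$ satisfy (M1) and (M2). The first step is to locate an identity element for $\oplus$. Writing $e_{x,y} := \neg(x\oplus(\neg x\oplus y))$, identity (M1) says $e_{x,y}\oplus z = z$ for all $x,y,z$, so every $e_{x,y}$ is a left identity; one must show $e_{x,y}$ is independent of $x$ and $y$ (hence a constant $0$), that $0$ is two-sided, and that $0 = \neg(\neg x\oplus x)$. The device is to substitute terms of the form $e_{\cdot,\cdot}$ and $\neg e_{\cdot,\cdot}$ into (M2) so that large subterms collapse via (M1) and the two sides reduce to comparisons among candidate identities. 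The second step derives the involution law $\neg\neg x = x$ and the unit laws $x\oplus 0 = x$ and $x\oplus\neg 0 = \neg 0$, by further specialisations of (M2) simplified with (M1) and the first step. The third step, which is the technical core, exploits the now-simplified shape of (M2) --- at this stage it resembles a disguised instance of $\neg(\neg x\oplus y)\oplus y = \neg(\neg y\oplus x)\oplus x$ --- to extract (A6), and then deduces commutativity (A2) and associativity (A1) from (A6) together with the unit and involution laws; the remaining axiom (A5) then falls out as a specialisation. In practice I expect each step to be realised as a chain of short intermediate identities found and certified by \Prover{}, with \Mace{} used to kill plausible-looking but false conjectures; the genuinely human input is the ladder itself --- existence of $0$, then $\neg\neg x = x$, then the unit laws, then (A6), then (A1) and (A2).

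\emph{Main obstacle.} The hard point is the first step of the second direction: initially the language has no constant and essentially only (M2) to exploit beyond the ``left identity is absorbed'' rule (M1), so producing any identity element at all requires a nonobvious substitution into (M2). After an identity and the involution law are available the argument becomes comparatively mechanical. Finally, \Mace{} is used to confirm that the basis is of minimal size two, i.e. that neither (M1) nor (M2) alone axiomatises the variety.
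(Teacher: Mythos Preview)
Your plan matches the paper's architecture closely: both directions, the $e_{x,y}$ left-identity device from (M1), substituting $e$-terms into (M2) to collapse subterms, extracting a constant $0$, then the involution and unit laws, with \Mace\ models for independence at the end. That is exactly what the paper does.

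The one place where your ladder diverges from the paper is the internal ordering of the second direction. You propose: $0$, then $\neg\neg x=x$ and the unit laws, then (A6), then (A2) and (A1), with (A5) last ``as a specialisation''. The paper's route is: $0$, (A3), (A4), then (A5) \emph{early}, then a long direct attack on commutativity (A2) via a chain of a dozen or so intermediate identities (this is by far the longest part of the proof), after which (A6) drops out of (A2) applied to an already-derived identity, and (A1) comes last. Two points are worth flagging. First, (A5) is not a trailing afterthought in the paper: it is used repeatedly inside the derivation of (A2), so deferring it to the end would force you to reorganise those steps. Second, the paper does not ``extract (A6) and then deduce (A2) from it''; it goes the other way, and the hard work is getting (A2). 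Your claim that the simplified (M2) ``resembles a disguised instance of'' (A6) is not wrong in spirit, but the paper's experience is that turning that resemblance into an actual proof of (A6) \emph{without} already having commutativity is not straightforward. If you intend to let \Prover\ discover its own path this may not matter, but if you are writing the ladder by hand you should expect (A2), not (A6), to be the bottleneck.
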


In an MV-algebra $(A,\oplus,\neg,0)$, set $x\bck y = \neg x \oplus y$ and $1 = \neg 0$. Then
$(A,\bck,1,0)$ is a bounded, commutative BCK-algebra. An algebra $(A,\bck,1)$ of type
$\langle 2,0\rangle$ is a \emph{commutative BCK-algebra} if it satisfies the identities
\begin{align*}
(x\bck y)\bck y &= (y\bck x)\bck x  \tag{B1} \\
x\bck (y\bck z) &= y\bck (x\bck z)  \tag{B2} \\
x\bck x &= 1 \tag{B3} \\
1\bck x &= x\,. \tag{B4}
\end{align*}
This basis is due to H. Yutani \cite{Y}; we will not need the larger quasivariety of BCK-algebras in this paper.
The constant $1$ can be eliminated so that a commutative BCK-algebra can be
viewed as an algebra of type $\langle 2\rangle$ by replacing (B3) with $x\bck x = y\bck y$
and replacing (B4) with $(x\bck x)\bck y = y$.

It is known that the variety of commutative BCK-algebras is not $1$-based \cite{PW}.
Recently, Padmanabhan and Rudeanu showed that the variety is $2$-based, and gave the following
explicit basis \cite[Lemma 6]{PR}.
\begin{align*}
(x \bck x) \bck y &= y \\
(x \bck (y \bck z)) \bck ((u \bck v) \bck v) &= (y \bck (x \bck z)) \bck ((v \bck u) \bck u)\,.
\end{align*}
Here we offer the following particularly elegant improvement.

\begin{theorem}
\thmlabel{BCK}
The following identities form a basis for the variety of commutative BCK-algebras:
\begin{align*}
(x \bck x) \bck y &= y \tag{C1} \\
(x \bck y) \bck (z \bck y) &= (y \bck x) \bck (z \bck x)\,. \tag{C2}
\end{align*}
\end{theorem}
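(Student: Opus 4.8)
The plan is to prove the equivalence in two directions, with essentially all of the work in the converse.

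For the routine direction I would assume (B1)--(B4) and note that (C1) is just $(x\bck x)\bck y = 1\bck y = y$, by (B3) then (B4), while (C2) follows from two applications of (B2) and one of (B1):
\[
(x\bck y)\bck(z\bck y)\ \overset{(B2)}{=}\ z\bck\bigl((x\bck y)\bck y\bigr)\ \overset{(B1)}{=}\ z\bck\bigl((y\bck x)\bck x\bigr)\ \overset{(B2)}{=}\ (y\bck x)\bck(z\bck x).
\]

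For the converse, let $(A,\bck)$ satisfy (C1) and (C2); the goal is to exhibit a constant $1$ turning $(A,\bck,1)$ into a commutative BCK-algebra, the obvious candidate being $1 := x\bck x$. The first task is to check this is well-defined. Writing $e_x := x\bck x$, identity (C1) says $e_x\bck y = y$, so in particular $e_a\bck e_b = e_b$. Substituting $x := e_a$, $y := e_b$ into (C2) and simplifying both sides by (C1) gives $z\bck e_a = z\bck e_b$ for all $a,b,z$; taking $z := e_a$ then yields $e_b = e_a\bck e_b = e_a\bck e_a = e_{e_a}$, which does not depend on $b$. Hence $x\bck x$ is a constant $1$, so (B3) holds and (C1) \emph{is} (B4). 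Substituting $z := 1$ into (C2) and using (B4) gives (B1), and substituting $y := z$ into (C2) (and then $x := 1$) gives the auxiliary law $x\bck 1 = 1$.

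The remaining axiom (B2) is where the difficulty lies. First I would record the antisymmetry law: if $x\bck y = 1$ and $y\bck x = 1$ then, by (B1) and (B4), $y = 1\bck y = (x\bck y)\bck y = (y\bck x)\bck x = 1\bck x = x$. Since (B2) is symmetric under interchanging $x$ and $y$, antisymmetry reduces the proof of (B2) to the single identity
\begin{equation*}
\bigl(x\bck(y\bck z)\bigr)\bck\bigl(y\bck(x\bck z)\bigr) = 1. \tag{$\star$}
\end{equation*}
Indeed, the instance of $(\star)$ with $x$ and $y$ interchanged, together with antisymmetry, gives (B2). To prove $(\star)$ I would first mine (C2) for the weakening laws it encodes --- for instance $x\bck(y\bck x) = 1$, obtained by setting the first variable of (C2) to $1$ and using $x\bck 1 = 1$ --- and then assemble $(\star)$ from such laws together with (B1) and $x\bck 1 = 1$ via an appropriate chain of substitutions. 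Identifying that chain is the real obstacle, and I would expect to search for it with \Prover{}; once $(\star)$, and hence (B2), is in hand, all of (B1)--(B4) hold and the theorem follows.
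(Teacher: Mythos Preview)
Your plan matches the paper's almost exactly: same forward direction, same reduction of the converse to showing $x\bck x$ is constant, deriving (B1) by putting $z=1$ in (C2), proving the auxiliary identities $x\bck 1=1$ and $x\bck(y\bck x)=1$, and then obtaining (B2) from the key identity $(\star)$ via antisymmetry. Your derivation that $x\bck x$ is constant is in fact a bit slicker than the paper's, and your route to $x\bck 1=1$ (take $y=z$ in (C2), then $x=1$) is a clean alternative to the paper's four-step chain.

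The genuine gap is that you do not prove $(\star)$; you identify it as ``the real obstacle'' and defer to \Prover. But $(\star)$ \emph{is} the content of the theorem --- everything else is routine --- so without it you have an outline, not a proof. The paper gives an explicit seven-step derivation: rewrite the left factor as $1\bck(x\bck(y\bck z))$, replace $1$ by $z\bck(y\bck z)$ using $x\bck(y\bck x)=1$, apply (C2) to turn this into $((y\bck z)\bck z)\bck(x\bck z)$, then apply (C2) once more (with the other bracketing) to produce a right factor $y\bck((y\bck z)\bck z)$, which (B1) and $x\bck(y\bck x)=1$ collapse to $1$; finally $w\bck 1=1$ finishes it. You have all the ingredients listed, so you are one concrete substitution-chase away. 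A second, minor omission: the theorem claims a \emph{basis}, so you should also exhibit models witnessing the independence of (C1) and (C2); the paper uses the two obvious two-element tables.
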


Commutative BCK-algebras have a natural upper semilattice structure defined by
$x\vee y = (x\bck y)\bck y$. The constant $1$ is the top element of this semilattice.
A commutative BCK-algebra is \emph{bounded} if there is also a bottom element $0$.
D. Mundici showed that MV-algebras and bounded, commutative BCK-algebras are
term equivalent \cite{Mundici}

A commutative BCK-algebra $(A,\bck,1)$ (or $(A,\bck)$) is said to be an \emph{{\L}BCK-algebra}
(``{\L}'' for {\L}ukasiewicz) if it
is a $\bck$-subreduct of a bounded, commutative BCK-algebra $(A,\bck,1,0)$ (or $(A,\bck,0)$) \cite{DV}.
The class of {\L}BCK-algebras is a subvariety of the variety of commutative BCK-algebras
axiomatized by (B1)--(B4) (or the equivalent forms after removing $1$) and
\[
(x\bck y)\bck (y\bck x) = y\bck x\,. \tag{B5}
\]
By Theorem \thmref{BCK}, a $3$-base for {\L}BCK-algebras is given by (C1), (C2) and (B5).
However, there is also a nice $2$-base.

\begin{theorem}
\thmlabel{LBCK}
The following identities form a basis for the variety of {\L}BCK-algebras:
\begin{align*}
(x \bck x) \bck y &= y \tag{L1} \\
((x\bck y)\bck (z\bck x))\bck (y\bck x) &= (x\bck z)\bck (y\bck z)\,. \tag{L2}
\end{align*}
\end{theorem}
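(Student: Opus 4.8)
The plan is to prove the two inclusions separately, leaning on Theorem~\thmref{BCK} to cut down the work in the harder direction.

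\textbf{Every {\L}BCK-algebra satisfies (L1) and (L2).} Identity (L1) is immediate from (B3) and (B4). For (L2), since every term involved uses only $\bck$, it suffices to verify it in the standard MV-algebra on $[0,1]$, which — by Mundici's term equivalence together with Chang's completeness theorem — generates the variety of {\L}BCK-algebras. Writing $x\bck y = \neg(x\ominus y)$ with $x\ominus y = \max(x-y,0)$ and using $\neg a \ominus \neg b = b\ominus a$, identity (L2) collapses to
\[
(y\ominus x)\ominus\bigl((z\ominus x)\ominus(x\ominus y)\bigr) \;=\; (y\ominus z)\ominus(x\ominus z)\,,
\]
which one checks by a short case analysis on the relative order of $x$, $y$, $z$. (Equivalently, one produces a direct equational derivation of (L2) from (B1)--(B5) with \Prover.)

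\textbf{Every model of (L1) and (L2) is an {\L}BCK-algebra.} Note first that (L1) is literally (C1). The crux of the proof — and the step I expect to be the main obstacle — is to derive (C2) from (L1) and (L2) by a purely equational argument. The promising opening moves are the specializations of (L2) obtained by setting $z:=x$, $z:=y$, and $x:=y$: each of these has one side simplified by (L1) (which says $x\bck x$ is a left identity for $\bck$), and the task is then to stitch such consequences together into (C2); in practice this is exactly the kind of derivation produced by \Prover. Once (C2) is in hand, Theorem~\thmref{BCK} guarantees that $(A,\bck)$ is a commutative BCK-algebra: (B1), (B2), and the constant-free forms of (B3) and (B4) hold, the element $1 := x\bck x$ is well defined, and $1\bck y = y$.

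\textbf{Deriving (B5).} From (C2) with $x:=z$ we get $(z\bck y)\bck(z\bck y) = (y\bck z)\bck(z\bck z)$, i.e.\ $(y\bck z)\bck 1 = 1$; since $a = 1\bck a$, applying this with the arguments $1$ and $a$ gives $a\bck 1 = (1\bck a)\bck 1 = 1$ for every $a$. Now specialize (L2) by $z:=y$: its right-hand side is $(x\bck y)\bck(y\bck y) = (x\bck y)\bck 1 = 1$, so, writing $u = (x\bck y)\bck(y\bck x)$ and $v = y\bck x$, we have $u\bck v = 1$; and by (B2), $v\bck u = (y\bck x)\bck\bigl((x\bck y)\bck(y\bck x)\bigr) = (x\bck y)\bck\bigl((y\bck x)\bck(y\bck x)\bigr) = (x\bck y)\bck 1 = 1$ as well. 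Finally (B1) gives $(u\bck v)\bck v = (v\bck u)\bck u$, i.e.\ $1\bck v = 1\bck u$, whence $u = v$ — which is exactly (B5). Thus, modulo Theorem~\thmref{BCK}, the only genuinely non-routine computation is the equational passage from (L1), (L2) to (C2).
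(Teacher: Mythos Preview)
Your overall strategy is sound, but the proposal has a real gap at its center: you never actually derive (C2) from (L1) and (L2). You correctly flag this as ``the crux of the proof'' and ``the main obstacle,'' then hand it off to \Prover\ --- that is not a proof, and it is precisely the content the theorem is asserting. The paper's Lemma~\lemref{lbck2} supplies a complete equational derivation, and notably it does \emph{not} route through (C2) and Theorem~\thmref{BCK}: it first proves the auxiliary identity $[x\bck(x\bck x)]\bck y = y$, then (B1), then shows that $x\bck x$ is constant (this is where your ``specialize (L2) at $z:=x$, $z:=y$, $x:=y$'' hints would have to be cashed out in detail), then (B3), (B4), the facts $x\bck 1 = 1$ and $x\bck(y\bck x)=1$, then (B5), and only at the very end (B2). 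So in the paper (B5) comes \emph{before} (B2), whereas your plan needs (B2) (via (C2) and Theorem~\thmref{BCK}) \emph{before} (B5). Both orderings are legitimate in principle, but since deriving (C2) in the presence of (L1) is tantamount to deriving (B1) and (B2), your route does not shorten the work; it merely relocates it into the step you left blank.

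Two smaller points. Your argument for (B5) is correct and pleasantly short once (B1), (B2), and $a\bck 1 = 1$ are available; it is a genuine alternative to the paper's derivation of (B5), which avoids (B2) entirely. Your treatment of the forward direction via Chang completeness and a case split on $[0,1]$ is valid (the $\bck$-reduct of $[0,1]$ does generate the variety of {\L}BCK-algebras), but the paper instead gives a self-contained equational derivation of (L2) from (B1)--(B5), which is more in keeping with a paper about equational bases. Finally, you do not address the independence of (L1) and (L2); the paper provides two-element countermodels.
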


In \S\secref{MV}, \S\secref{CBCK} and \S\secref{LBCK}, we prove Theorems
\thmref{MV}, \thmref{BCK} and \thmref{LBCK}, respectively. Finally in
\S\secref{problems}, we give some open problems.

\section{MV-algebras}
\seclabel{MV}

In this section we prove Theorem \thmref{MV}. First we show that MV-algebras
satisfy the identities (M1) and (M2).

\begin{lemma}
\lemlabel{M1}
Every MV-algebra satisfies (M1) and (M2).
\end{lemma}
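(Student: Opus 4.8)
The plan is to verify that the defining identities (A1)--(A6) of an MV-algebra imply both (M1) and (M2). Since every MV-algebra satisfies (A1)--(A6), it suffices to manipulate these identities. For (M1), note that the left-hand side has the form $\neg w \oplus z$ where $w = x \oplus (\neg x \oplus y)$, so $\neg w \oplus z = w \bck z$ in the $\bck$-notation; the claim $w \bck z = z$ amounts to showing $w = \neg(\neg x \oplus x) \oplus$-absorbed, i.e. that $\neg(x\oplus(\neg x \oplus y))$ behaves like $0$. Indeed, using (A1) to reassociate, $x \oplus (\neg x \oplus y) = (x \oplus \neg x)\oplus y$, and the standard MV-algebra fact $x \oplus \neg x = 1 = \neg 0$ (which follows from (A6) with $y$ replaced appropriately, together with (A3), (A4), (A5)); then (A5) gives $1 \oplus y = \neg 0 \oplus y = y \oplus \neg 0 = \neg 0 = 1$ by (A2) and (A5). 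Hence $w = 1$, so $\neg w = \neg 1 = \neg\neg 0 = 0$ by (A4), and finally $\neg w \oplus z = 0 \oplus z = z \oplus 0 = z$ by (A2) and (A3). This establishes (M1).

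For (M2), the strategy is to recognize the two big summands on the left as instances of the MV-term $x \ominus y := \neg(\neg x \oplus y)$ or, better, as instances of the median-like term appearing in (A6). Rewrite (A6) in the form $(\neg x \ominus y)\oplus\cdots$; more precisely, setting $a \to b = \neg a \oplus b$, identity (A6) says $(x \to y)\to y$ applied symmetrically, i.e. $x \vee y$ is well-defined, where $x \vee y = \neg(\neg x \oplus y)\oplus y = (x\bck y)\bck y$ in BCK-notation. Now put $p = x \oplus y$ and $q = z \oplus u$. The left-hand side of (M2) is $\neg(\neg p \oplus \neg q) \oplus \neg(q \oplus \neg(p \oplus q))$. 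Observe $\neg(q \oplus \neg(p\oplus q)) = \neg(q \oplus \neg(q \oplus p)) = \neg(\neg(\neg q) \oplus \neg(q\oplus p))$; using the MV-identity $\neg(\neg a \oplus b) \oplus b = \neg(\neg b \oplus a)\oplus a$ and the derived fact that $\neg(a \oplus \neg(a\oplus b)) = \neg(b \oplus \neg(a \oplus b))$ (both equal $\neg(a \vee b)$ essentially), one reduces the second summand. The key MV-algebra lemma to invoke is the standard identity $\neg(\neg a \oplus \neg b) \oplus \neg(b \oplus \neg(a\oplus b)) = a$ — equivalently $(a \odot b) \oplus (a \ominus b) = a$, where $a \odot b = \neg(\neg a \oplus \neg b)$ and $a \ominus b = \neg(\neg a \oplus b)$ — which is a well-known consequence of (A1)--(A6). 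Applying this with $a = p = x\oplus y$ and $b = q = z \oplus u$ yields exactly $x \oplus y$, the right-hand side of (M2).

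Concretely, I would first establish as preliminary sub-lemmas the following standard MV-consequences of (A1)--(A6), in order: (i) $x \oplus 1 = 1$ and $\neg 1 = 0$; (ii) $0 \oplus x = x$; (iii) $\neg(\neg x \oplus x) = 0$; (iv) the ``$\vee$ is symmetric'' restatement of (A6); and (v) $(x \odot y)\oplus(x \ominus y) = x$, i.e. $\neg(\neg x\oplus\neg y)\oplus\neg(\neg x\oplus y) = x$. Each of these is a short derivation from the axioms (they are all in Chang's original paper and in \cite{COM}); I would cite \cite{COM} for (i)--(v) rather than rederive them. Then (M1) follows from (i)--(iii) as above, and (M2) follows by substituting $x \mapsto x\oplus y$, $y \mapsto z \oplus u$ in (v) after a commutativity rearrangement inside the second summand to match the form $\neg(z\oplus(u\oplus\neg(x\oplus(y\oplus(z\oplus u))))) = \neg((z\oplus u)\oplus\neg((x\oplus y)\oplus(z\oplus u)))$ via (A1).

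The main obstacle is purely bookkeeping: matching the heavily-parenthesized terms in (M2) to the clean form of identity (v) requires several applications of associativity (A1) and commutativity (A2) buried inside nested $\neg$'s, and one must be careful that the substitution $x\mapsto x\oplus y$ is applied consistently throughout. I expect no conceptual difficulty — (M1) and (M2) are engineered to be instances of well-known MV-identities — but the term-rewriting in (M2) is where an automated prover (\Prover) is genuinely convenient, and I would present the human-readable version by first reducing (M2) to identity (v) via an explicit chain of (A1)/(A2) rewrites on the second summand, then quoting (v).
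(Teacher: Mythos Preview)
Your treatment of (M1) is correct and essentially identical to the paper's: both first establish $x\oplus\neg x=\neg 0$, then reduce via (A1), (A5), (A4), (A2), (A3).

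For (M2), your reduction is also the same as the paper's first step: two applications of (A1) rewrite the second summand as $\neg((z\oplus u)\oplus\neg((x\oplus y)\oplus(z\oplus u)))$, so that with $p=x\oplus y$ and $q=z\oplus u$ the task becomes the two--variable identity
\[
\neg(\neg p\oplus\neg q)\;\oplus\;\neg\bigl(q\oplus\neg(p\oplus q)\bigr)\;=\;p,
\]
which you state correctly once. The paper then proves this identity by a direct chain of about fifteen rewrites using (A2), (A4), (A6) and the fact $\neg x\oplus x=\neg 0$; your plan to cite it from \cite{COM} instead is a legitimate shortcut.

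However, there is a genuine error in your key lemma. You claim the displayed identity is ``equivalently $(a\odot b)\oplus(a\ominus b)=a$'' and restate it as (v): $\neg(\neg x\oplus\neg y)\oplus\neg(\neg x\oplus y)=x$. This is \emph{false} in MV-algebras: in the standard model $[0,1]$ with $x=y=\tfrac12$, both summands are $0$, so the left side is $0\neq\tfrac12$. The second summand $\neg(q\oplus\neg(p\oplus q))$ is $(p\oplus q)\ominus q$, not $p\ominus q$; these coincide only when $p\oplus q=p$ already. Consequently you will not find (v) in \cite{COM}, and substituting into (v) does not yield (M2). The correct lemma to cite or prove is $(p\odot q)\oplus\bigl((p\oplus q)\ominus q\bigr)=p$, which does hold (it is Proposition~1.1.5 territory in \cite{COM}, equivalent to $p\ominus(p\odot q)=(p\oplus q)\ominus q$ together with $c\oplus(p\ominus c)=p$ for $c\leq p$). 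Once you replace (v) by this, your argument goes through and is somewhat more modular than the paper's explicit rewrite chain.
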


\begin{proof}
First, we observe that
\begin{equation}
\eqnlabel{xnegx1}
x\oplus \neg x \byx{(A2)} \neg x \oplus x = \neg 0\,.
\end{equation}
Indeed,
\begin{align*}
\neg x \oplus x &\byx{(A3)} \neg (\underbrace{x\oplus 0}) \oplus x \\
&\byx{(A2)} \neg (0\oplus x)\oplus x \\
&\byx{(A4)} \neg (\neg \neg 0\oplus x)\oplus x \\
&\byx{(A6)} \neg (\neg x\oplus \neg 0 )\oplus  \neg 0  \\
&\byx{(A5)} \neg 0\,.
\end{align*}

For (M1), we have
\begin{align*}
\neg (x \oplus (\neg x \oplus y)) \oplus z &\byx{(A1)} \neg ((x \oplus \neg x) \oplus y)\oplus z \\
&\by{xnegx1} \neg (\underbrace{\neg 0 \oplus y}) \oplus z \\
&\byx{(A2)} \neg (y\oplus \neg 0)\oplus z \\
&\byx{(A5)} \neg \neg 0 \oplus z \\
&\byx{(A4)} 0 \oplus z \\
&\byx{(A2)} z\oplus 0 \\
&\byx{(A3)} z\,. &&
\end{align*}

For (M2), we compute
\begin{align*}
&\neg (\neg (x \oplus y) \oplus \neg (z \oplus u)) \oplus
\neg (z \oplus (u \oplus \neg (\underbrace{x \oplus (y \oplus (z \oplus u))}))) \\
&\byx{(A1)} \neg (\neg (x \oplus y) \oplus \neg (z \oplus u)) \oplus
\neg ((z \oplus u) \oplus \neg (\underbrace{(x \oplus y)} \oplus (z \oplus u))) \\
&\byx{(A4)} \neg (\neg (x \oplus y) \oplus \neg (z \oplus u)) \oplus
\neg (\underbrace{(z \oplus u) \oplus \neg (\neg \neg(x \oplus y) \oplus (z \oplus u))}) \\
&\byx{(A2)} \neg (\neg (x \oplus y) \oplus \neg (z \oplus u)) \oplus
\neg (\underbrace{\neg (\neg \neg(x \oplus y) \oplus (z \oplus u)) \oplus (z \oplus u)}) \\
&\byx{(A6)} \neg (\underbrace{\neg (x \oplus y) \oplus \neg (z \oplus u)}) \oplus
\neg (\neg (\neg(z \oplus u) \oplus \neg(x \oplus y)) \oplus \neg (x \oplus y)) \\
&\byx{(A2)} \neg (\neg (z \oplus u)v\neg (x \oplus y)) \oplus
\neg (\neg (\neg(z \oplus u) \oplus \neg(x \oplus y)) \oplus \neg (x \oplus y)) \\
&\byx{(A2)} \neg (\underbrace{\neg (\neg(z \oplus u) \oplus \neg(x \oplus y)) \oplus \neg (x \oplus y)})
\oplus \neg (\neg (z \oplus u)\oplus \neg (x \oplus y)) \\
&\byx{(A2)} \neg (\neg (x \oplus y) \oplus \neg (\neg(z \oplus u) \oplus \neg(x \oplus y)))
\oplus \neg (\neg (z \oplus u)\oplus \neg (x \oplus y)) \\
&\byx{(A6)} \neg (\underbrace{\neg\neg (\neg (z \oplus u)\oplus \neg (x \oplus y))} \oplus (x\oplus y)) \oplus (x\oplus y) \\
&\byx{(A4)} \neg (\underbrace{(\neg (z \oplus u)\oplus \neg (x \oplus y)) \oplus (x\oplus y)}) \oplus (x\oplus y) \\
&\byx{(A1)} \neg (\neg (z \oplus u)\oplus (\underbrace{\neg (x \oplus y) \oplus (x\oplus y)})) \oplus (x\oplus y) \\
&\by{xnegx1} \neg (\underbrace{\neg (z \oplus u)\oplus \neg 0}) \oplus (x\oplus y) \\
&\byx{(A5)} \neg \neg 0 \oplus (x\oplus y) \\
&\byx{(A4)} 0 \oplus (x\oplus y) \\
&\byx{(A2)} (x\oplus y)\oplus 0 \\
&\byx{(A3)} x\oplus y\,.
\end{align*}
This completes the proof of the lemma.
\end{proof}

\begin{lemma}
\lemlabel{M2}
Let $(A,\oplus,\neg)$ be an algebra satisfying (M1) and (M2). Then
$(A,\oplus,\neg)$ is an MV-algebra.
\end{lemma}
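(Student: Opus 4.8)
The plan is to derive, from (M1) and (M2) alone, the six MV-algebra identities in the signature $\langle 2,1\rangle$ --- that is, (A1)--(A6) with the constant $0$ replaced throughout by a suitable term, which we must also produce. I would carry this out in the order customary for such bootstrapping arguments: first isolate an identity element, then establish double negation, then commutativity, then associativity, and finally the two remaining laws. The basic leverage comes from (M1), which says that every term $o_{x,y} := \neg(x \oplus (\neg x \oplus y))$ is a \emph{left} identity for $\oplus$, so that such terms may always be cancelled on the left.

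\emph{Steps 1--2 (identity element, $\neg\neg x = x$, commutativity).} The first task is to show that all the left identities $o_{x,y}$ collapse to a single element $0$ and, eventually, that $0 = \neg(\neg w \oplus w)$, which is what pins down the signature. I would look for substitutions of left-identity terms into (M1) and (M2) that force $\neg\neg x = x$, i.e. (A4); once double negation is available one can rewrite the inner subterms $\neg(x\oplus y)$ and $\neg(z\oplus u)$ appearing in (M2). Then, using (M2) with substitutions that drive $x\oplus y$ and $z\oplus u$ to controlled values --- for instance sending one of $y,u$ to a left identity --- I would extract $x\oplus 0 = x$ (the modified (A3)) and then the commutative law (A2).

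I expect commutativity to be the main obstacle. The identities (M1) and (M2) are written with fully right-nested parenthesizations and are not visibly symmetric in $\oplus$, so producing $x\oplus y = y\oplus x$ will require a careful chain, most plausibly routing (M2) through itself with the two ``halves'' $x\oplus y$ and $z\oplus u$ interchanged, and probably passing through one or two non-obvious auxiliary identities; this is exactly the kind of step where a run of \Prover{} (with \Mace{} to rule out blind alleys) is likely to be needed.

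\emph{Step 3 (associativity and the remaining axioms).} Once commutativity and a genuine two-sided identity are in hand, the outer layers of (M2) can be peeled away: sending $z\oplus u$ to $0$ should collapse (M2) to a relation that, combined with the earlier steps, yields the associative law (A1); alternatively (A1) may drop out of (M1) by choosing $y$ to expose a re-association. With (A1)--(A4) in place, $\neg 0$ is the top element of the induced order, (A5) follows by specializing (M1) (the derivation of ``$\neg\neg 0 \oplus z = z$'' inside the proof of Lemma \lemref{M1} is essentially reversible), and (A6) is obtained from (M2) by running the computation in that same proof backwards, now that all the scaffolding is available. These last steps are routine unfolding once Steps 1--2 are complete.
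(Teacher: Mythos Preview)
Your outline broadly matches the paper's architecture: first extract from (M1) that every $e_{x,y}:=\neg(x\oplus(\neg x\oplus y))$ is a left identity, show these collapse to a single constant $0=\neg(u\oplus\neg u)$, derive (A3) and (A4), and then fight for commutativity. Where you diverge, and where the real gaps are, is in Step~3.

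First, the ordering you propose for the last three axioms is not the one that works in the paper, and your specific mechanisms look fragile. In the paper, (A5) is proved \emph{before} commutativity (it is used repeatedly inside the commutativity argument), and (A6) is obtained \emph{immediately from commutativity} applied to an intermediate identity $x\oplus\neg(\neg y\oplus x)=y\oplus\neg(y\oplus\neg x)$ that falls out midway through the commutativity chain. It is not obtained by ``running Lemma~\lemref{M1} backwards'': that computation uses (A1), (A2), (A5), and (A6) itself, so reversing it after you have only (A1)--(A4) is circular. Likewise, associativity does not drop out by sending $z\oplus u$ to $0$ in (M2); that substitution is exactly what produces one of the two working identities \eqnref{13}, not (A1). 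In the paper (A1) is the \emph{last} axiom proved, via a two-step bootstrapping $(x\oplus y)\oplus(z\oplus\neg(x\oplus(y\oplus z)))=\neg 0$ and then $((x\oplus y)\oplus z)\oplus\neg(x\oplus(y\oplus z))=\neg 0$, after which (A6) is used as a cancellation principle.

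Second, and more importantly, your proposal is a plan rather than a proof: the crux --- the derivation of (A2) --- is not supplied. You correctly flag commutativity as the main obstacle, but the paper's argument for it is long (roughly fifteen intermediate identities, \eqnref{21}--\eqnref{47}) and not at all a matter of ``routing (M2) through itself with the two halves interchanged''. Without that chain, or a substitute for it, the proposal does not constitute a proof.
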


\begin{proof}
By (M1), any expression of the form $\neg (x \oplus (\neg x\oplus y))$ is a left identity
element for $\oplus$.  We denote this expression by $e_{x,y}$ so that
\begin{equation}
\eqnlabel{M1a}
e_{x,y}\oplus z = z\,.
\end{equation}

Our first step is to give two simpler consequences of (M2) which we will use in the rest
of the proof rather than (M2) itself.
Now in (M2), set $x = e_{w,w}$ and use \eqnref{M1a} three times to get
\begin{equation}
\eqnlabel{9}
\neg (\neg y\oplus \neg (z\oplus u))\oplus \neg (z\oplus (u\oplus \neg (y\oplus (z\oplus u))))
= y\,.
\end{equation}
Also, set $z = e_{w,w}$ in (M2), and use \eqnref{M1a} three times to get
\begin{equation}
\eqnlabel{13}
\neg (\neg (x\oplus y)\oplus \neg u)\oplus \neg (u\oplus \neg (x\oplus (y\oplus u))) = x\oplus y\,.
\end{equation}

In the next step, we determine the constant $0$.
First, set $z = e_{w,w}$ in \eqnref{9} and use \eqnref{M1a} three times to get
\begin{equation}
\eqnlabel{12}
\neg (\neg y\oplus \neg u)\oplus \neg (u\oplus \neg (y\oplus u)) = y\,.
\end{equation}

Now set $y = e_{x,z}$ in \eqnref{12} reversed to get
\begin{align*}
e_{x,z} &= \neg (\neg e_{x,z}\oplus \neg u)\oplus \neg (u\oplus \neg(\underbrace{e_{x,z}\oplus u})) \\
&\by{M1a} \neg (\underbrace{\neg e_{x,z}\oplus \neg u})\oplus \neg (u\oplus \neg u) \\
&\by{M1a} \neg (e_{x,z}\oplus (\neg e_{x,z}\oplus \neg u))\oplus \neg (u\oplus \neg u) \\
&= e_{e_{x,z},\neg u} \oplus \neg (u\oplus \neg u) \\
&\by{M1a} \neg (u\oplus \neg u)\,,
\end{align*}
which gives,
\[
\neg (u\oplus \neg u) = e_{x,z}\,.
\]
The left side of this last equation does not depend on $x$ and $z$, and the
right side does not depend on $u$, so both sides are constant. Thus we now
define
\begin{equation}
\eqnlabel{zero}
0 = \neg (u\oplus \neg u) = e_{x,z}\,.
\end{equation}

Now we turn to the axioms themselves, starting with (A3).
By (M1) (or \eqnref{M1a}), we have
\begin{equation}
\eqnlabel{A3a}
0 \oplus x = x\,,
\end{equation}
which is almost (A3). In \eqnref{9}, take $u = \neg z$ and apply \eqnref{zero}
twice to get
\begin{equation}
\eqnlabel{new9}
\neg(\neg y \oplus 0) \oplus 0 = y\,.
\end{equation}
Set $u = \neg y$ in \eqnref{12} reversed to obtain
\begin{align*}
y &= \underbrace{\neg (\neg y\oplus \neg \neg y)}\oplus \neg (\neg y\oplus \underbrace{\neg (y\oplus \neg y)}) \\
&\by{zero} 0 \oplus \neg (\neg y \oplus \neg 0) \\
&\by{A3a} \neg (\neg y\oplus \neg 0)\,,
\end{align*}
which gives
\begin{equation}
\eqnlabel{new12}
\neg (\neg y \oplus 0) = y\,.
\end{equation}
Using this in the left side of \eqnref{new9}, we have $y\oplus 0 = y$, which is (A3).

Applying (A3) to \eqnref{new12}, we obtain $\neg\neg y = y$, which is (A4).

Now
\[
0 = e_{0,x} = \neg (0\oplus (\neg 0 \oplus x)) \by{A3a} \neg(\neg 0\oplus x)\,.
\]
So applying $\neg$ to both sides of this and using (A4), we have
\begin{equation}
\eqnlabel{A5a}
\neg 0\oplus x = \neg 0\,,
\end{equation}
which is almost (A5).

To prove (A5) itself, we compute
\begin{align*}
x\oplus \neg 0 &\by{zero} x\oplus \neg \neg (\neg x\oplus \underbrace{\neg \neg x}) \\
&\byx{(A4)} x\oplus \underbrace{\neg \neg (\neg x \oplus x)} \\
&\byx{(A4)} x\oplus (\neg x \oplus x) \\
&\byx{(A4)} \neg \neg (x\oplus (\neg x\oplus x)) \\
&= \neg e_{x,x} \\
&\by{zero} \neg 0\,,
\end{align*}
thus establishing the claim.

The next and longest part of the proof is of commutativity (A2).

Set $u = 0$ in \eqnref{9}, apply (A3) twice and \eqnref{A3a} once to obtain
\[
\neg (\neg y\oplus \neg z)\oplus \neg (z\oplus \neg (y\oplus z)) = y\,.
\]
Adding $\neg y\oplus \neg z$ on the left to both sides of this and reversing, we get
\begin{align*}
(\neg y\oplus \neg z)\oplus y &= (\neg y\oplus \neg z)\oplus [\neg (\neg y\oplus \neg z)\oplus \neg (z\oplus \neg (y\oplus z))] \\
&= \neg e_{\neg y\oplus \neg z, \neg (z\oplus \neg (y\oplus z))} \\
&\by{zero} \neg 0\,.
\end{align*}
Setting $y = \neg x$ and $z = \neg y$, and applying (A4) twice, we obtain
\begin{equation}
\eqnlabel{21}
(x\oplus y)\oplus \neg x = \neg 0\,.
\end{equation}

In \eqnref{12}, set $y = x\oplus z$ and $u = \neg x$. Then
$y\oplus u = \neg 0$ by \eqnref{21}, and so \eqnref{12} reversed becomes
\begin{align*}
x\oplus z &= \neg(\neg(x\oplus z)\oplus \underbrace{\neg\neg x})\oplus \neg(\neg x\oplus \underbrace{\neg\neg 0}) \\
&\byx{(A4)}\neg(\neg(x\oplus z)\oplus x)\oplus \neg(\underbrace{\neg x\oplus 0}) \\
&\byx{(A3)}\neg(\neg(x\oplus z)\oplus x)\oplus \underbrace{\neg\neg x} \\
&\byx{(A4)}\neg(\neg(x\oplus z)\oplus x)\oplus x\,.
\end{align*}
Replacing $z$ with $y$ and reversing this gives
\begin{equation}
\eqnlabel{30}
\neg(\neg(x\oplus y)\oplus x)\oplus x = x\oplus y\,.
\end{equation}

In \eqnref{9}, set $y = 0$. Then in reverse, \eqnref{9} becomes
\begin{align*}
0 &= \neg (\underbrace{\neg 0\oplus \neg (z\oplus u)})\oplus
\neg (z\oplus (u\oplus \neg (0\oplus (z\oplus u)))) \\
&\by{A5a} \neg \neg 0 \oplus
\neg (z\oplus (u\oplus \neg (\underbrace{0\oplus (z\oplus u)}))) \\
&\by{A3a} \underbrace{\neg \neg 0} \oplus
\neg (z\oplus (u\oplus \neg (z\oplus u))) \\
&\byx{(A4)} 0 \oplus \neg (z\oplus (u\oplus \neg (z\oplus u))) \\
&\by{A3a} \neg (z\oplus (u\oplus \neg (z\oplus u))) \\
\end{align*}
Replacing $z$ with $x$, $u$ with $y$ and reversing this, we have
\begin{equation}
\eqnlabel{33}
\neg (x\oplus (y\oplus \neg (x\oplus y))) = 0\,.
\end{equation}

In \eqnref{9}, let $y = \neg x$ and $z = x$. Then $\neg y = x$ by (A4), and so
\eqnref{9} reversed becomes
\begin{align*}
\neg x &= \neg (x\oplus \neg(x\oplus u)) \oplus
\neg (x\oplus (u\oplus \neg(\underbrace{\neg x\oplus (x\oplus z)}))) \\
&\by{zero} \neg (x\oplus \neg(x\oplus u)) \oplus \neg (x\oplus (u\oplus \underbrace{\neg\neg 0})) \\
&\byx{(A4)} \neg (x\oplus \neg(x\oplus u)) \oplus \neg (x\oplus (\underbrace{u\oplus 0})) \\
&\byx{(A3)} \neg (x\oplus \neg(x\oplus u)) \oplus \neg (x\oplus u)\,.
\end{align*}
Reversing this and replacing $u$ with $y$, we have
\begin{equation}
\eqnlabel{36}
\neg(x\oplus \neg (x\oplus y))\oplus \neg (x\oplus y) = \neg x\,.
\end{equation}

In \eqnref{12}, set $u = \neg (y\oplus z)$. Then
$\neg(\neg y \oplus \neg u) = \neg(\neg y\oplus (y\oplus z))
= e_{\neg y,y} = 0$ by (A4) and \eqnref{zero}, and so
\eqnref{12} reversed becomes
\begin{align*}
y &= 0 \oplus \neg (\neg(y\oplus z)\oplus \neg (y\oplus \neg(y\oplus z))) \\
&\by{A3a} \neg (\neg(y\oplus z)\oplus \neg (y\oplus \neg(y\oplus z)))\,.
\end{align*}
Apply (A4) to both sides of this, replace $y$ with $x$, $z$ with $y$, and
reverse to obtain
\begin{equation}
\eqnlabel{40}
\neg (x\oplus y)\oplus \neg (x\oplus \neg (x\oplus y)) = \neg x\,.
\end{equation}

In \eqnref{13}, replace $x$ with $\neg x$, $y$ with $\neg y$ and
set $u = y\oplus \neg(x\oplus y)$. Then
$\neg (\neg (\neg x\oplus \neg y)\oplus \neg u) =
\neg x$ by \eqnref{12}, and so \eqnref{13} reversed becomes
\begin{align*}
\neg x\oplus \neg y &= \neg x\oplus
\neg ((y\oplus \neg (x\oplus y))\oplus
\neg (\neg x\oplus \neg (\underbrace{\neg y \oplus (y\oplus \neg (x\oplus y))}))) \\
&\by{zero} \neg x\oplus \neg ((y\oplus \neg (x\oplus y))\oplus
\neg (\underbrace{\neg x\oplus \neg 0})) \\
&\byx{(A5)} \neg x\oplus \neg ((y\oplus \neg (x\oplus y))\oplus \neg \neg 0) \\
&\byx{(A4)} \neg x\oplus \neg (\underbrace{(y\oplus \neg (x\oplus y))\oplus 0}) \\
&\byx{(A3)} \neg x\oplus \neg (y\oplus \neg (x\oplus y))
\end{align*}
Replacing $x$ with $\neg x$ once again and using (A4), our last calculation
yields
\begin{equation}
\eqnlabel{46}
x\oplus \neg (y\oplus \neg (\neg x\oplus y)) = x\oplus \neg y\,.
\end{equation}

In \eqnref{12}, set $y = x$ and $u = \neg(y\oplus \neg(\neg x\oplus y))$.
Then $\neg u = y\oplus \neg(\neg x\oplus y)$ by (A4), and so \eqnref{12}
reversed becomes
\begin{align*}
x &= \neg [\neg y\oplus (y\oplus \neg(\neg x\oplus y))]
\oplus \neg \{\neg(y\oplus \neg(\neg x\oplus y))
\oplus \neg [\underbrace{x\oplus \neg(y\oplus \neg(\neg x\oplus y))}]\} \\
&\by{46} \underbrace{\neg [\neg y\oplus (y\oplus \neg(\neg x\oplus y))]}
\oplus \neg \{\neg(y\oplus \neg(\neg x\oplus y))
\oplus \neg [ x\oplus \neg y]\} \\
&\by{33} 0 \oplus \neg \{\neg(y\oplus \neg(\neg x\oplus y))
\oplus \neg [ x\oplus \neg y]\} \\
&\by{A3a} \neg \{\neg(y\oplus \neg(\neg x\oplus y))
\oplus \neg [ x\oplus \neg y]\}
\end{align*}
Now apply (A4) to both sides of this last calculation and exchange the
roles of $x$ and $y$ to obtain
\begin{equation}
\eqnlabel{50} 
\neg (x\oplus \neg (\neg y\oplus x))\oplus \neg (y\oplus \neg x) = \neg y\,.
\end{equation}

Next, we compute
\begin{align*}
\neg 0 &\by{zero} \neg e_{x\oplus (\neg y\oplus x),\neg(y\oplus \neg x)} \\
&\byx{(A4)}
(x\oplus (\neg y\oplus x))\oplus [\underbrace{\neg(x\oplus (\neg y\oplus x))
\oplus \neg(y\oplus \neg x)}] \\
&\by{50} (x\oplus (\neg y\oplus x))\oplus \neg y\,,
\end{align*}
and so we have shown
\begin{equation}
\eqnlabel{53}
(x\oplus \neg(\neg y\oplus x))\oplus \neg y = \neg 0\,.
\end{equation}

In \eqnref{46}, take $x = u\oplus \neg(u\oplus v)$ and
$y = \neg(u\oplus v)$. Then
$\neg x\oplus y = \neg u$ by \eqnref{36}, and so the left side of \eqnref{46}
becomes
\[
(u\oplus \neg(u\oplus v))\oplus \neg(\neg(u\oplus v)\oplus \underbrace{\neg\neg u})
\byx{(A4)}
(u\oplus \neg(u\oplus v))\oplus \neg(\neg(u\oplus v)\oplus u)\,.
\]
The right side of \eqnref{46} becomes
\[
(u\oplus \neg(u\oplus v))\oplus \underbrace{\neg\neg(u\oplus v)}
\byx{(A4)} (u\oplus \neg(u\oplus v))\oplus (u\oplus v)\,.
\]
Replacing $u$ with $x$ and $v$ with $y$, we now have
\begin{equation}
\eqnlabel{58}
(x\oplus \neg(x\oplus y))\oplus \neg(\neg(x\oplus y)\oplus x)
= (x\oplus \neg(x\oplus y))\oplus (x\oplus y)\,.
\end{equation}

In \eqnref{53}, take $y = \neg(x\oplus z)\oplus x$. Then
$\neg y\oplus x = x\oplus z$ by \eqnref{30}, and so
\eqnref{53} reversed becomes
\begin{align*}
\neg 0 &= (x\oplus \neg(x\oplus z))\oplus \underbrace{\neg(\neg(x\oplus z)\oplus x)} \\
&\by{58} (x\oplus \neg(x\oplus z))\oplus (x\oplus z)\,.
\end{align*}
Replacing $z$ with $y$ and reversing, we have
\begin{equation}
\eqnlabel{59}
(x\oplus \neg(x\oplus y))\oplus (x\oplus y) = \neg 0\,.
\end{equation}

In \eqnref{50}, take $x = u\oplus v$ and $y = \neg (u\oplus \neg (u\oplus v))$.
Then $y\oplus \neg x = \neg u$ by \eqnref{36} and
$\neg y = u\oplus \neg (u\oplus v))$ by (A4), and so \eqnref{50} reversed
becomes
\begin{align*}
u\oplus \neg (u\oplus v) &=
\neg [(u\oplus v)\oplus (\underbrace{\neg\neg(u\oplus \neg(u\oplus v))}\oplus (u\oplus v))]
\oplus \underbrace{\neg \neg u} \\
&\byx{(A4)}
\neg [(u\oplus v)\oplus (\underbrace{(u\oplus \neg(u\oplus v))\oplus (u\oplus v)})] \oplus u \\
&\by{59} \neg [(u\oplus v)\oplus \underbrace{\neg\neg 0}] \oplus u \\
&\byx{(A4)} \neg [\underbrace{(u\oplus v)\oplus 0}] \oplus u \\
&\byx{(A3)} \neg (u\oplus v) \oplus u\,.
\end{align*}
Replace $u$ with $x$ and $v$ with $y$, and reverse to get
\begin{equation}
\eqnlabel{65}
\neg (x\oplus y)\oplus x = x\oplus \neg (x\oplus y)\,.
\end{equation}

In \eqnref{65}, take $x = \neg (u\oplus (\neg v\oplus u))$ and
$y = \neg (v\oplus \neg u)$. Then
$\neg (x\oplus y) = v$ by \eqnref{50}. Thus \eqnref{65} reversed becomes
\[
\neg (u\oplus \neg (\neg v\oplus u)) \oplus v
= v\oplus \underbrace{\neg (u\oplus \neg (\neg v\oplus u))}
\by{46} v\oplus \neg u\,.
\]
Replacing $u$ with $x$ and $v$ with $y$, we now have
\begin{equation}
\eqnlabel{68}
\neg (x\oplus \neg (\neg y\oplus x)) \oplus y
= y\oplus \neg x\,.
\end{equation}

Next, in \eqnref{40}, set $x = \neg (v\oplus \neg (\neg u\oplus v))$
and $y = \neg (u\oplus \neg v)$. Then by \eqnref{50},
$\neg (x\oplus y) = u$, and so the right side of \eqnref{40}
becomes
\[
\neg \neg (v\oplus \neg (\neg u\oplus v))
\byx{(A4)} v\oplus \neg (\neg u\oplus v)\,.
\]
The left side of \eqnref{40} is
\[
u\oplus \neg (\neg (v\oplus \neg (\neg u\oplus v))\oplus u)
\by{68}
u\oplus \neg (u\oplus \neg v)\,.
\]
So replacing $u$ with $x$ and $v$ with $y$, we have obtained
\begin{equation}
\eqnlabel{41} 
x\oplus \neg (\neg y\oplus x) = y\oplus \neg (y\oplus \neg x)\,.
\end{equation}

Next, we compute
\begin{align*}
x\oplus (y\oplus \neg (y\oplus \underbrace{x}))
&\byx{(A4)} x\oplus (\underbrace{y\oplus \neg (y\oplus \neg\neg x)}) \\
& \by{41} x\oplus (\neg x\oplus \neg (\neg y \oplus\neg x)) \\
&=\neg e_{x,\neg(\neg y\oplus \neg x)} \\
&\by{zero} \neg 0\,,
\end{align*}
which gives us
\begin{equation}
\eqnlabel{42}
x\oplus (y\oplus \neg (y\oplus x)) = \neg 0\,.
\end{equation}

Now in \eqnref{13}, take $u = \neg(y\oplus x)$. We have
\begin{align*}
x\oplus y &= \neg(\neg(x\oplus y)\oplus \neg\neg(y\oplus x))\oplus
\neg(\neg(y\oplus x)\oplus
\neg(\underbrace{x\oplus (y\oplus \neg(y\oplus x))})) \\
&\by{42} \neg(\neg(x\oplus y)\oplus \underbrace{\neg\neg(y\oplus x)})\oplus
\neg(\neg(y\oplus x)\oplus \underbrace{\neg\neg 0}) \\
&\byx{(A4)} \neg(\neg(x\oplus y)\oplus (y\oplus x))\oplus
\neg(\underbrace{\neg(y\oplus x)\oplus 0}) \\
&\byx{(A3)} \neg(\neg(x\oplus y)\oplus (y\oplus x))\oplus
\underbrace{\neg\neg(y\oplus x)} \\
&\byx{(A4)} \neg(\neg(x\oplus y)\oplus (y\oplus x))\oplus
(y\oplus x)\,,
\end{align*}
and hence we obtain
\begin{equation}
\eqnlabel{43}
\neg (\neg (x\oplus y)\oplus (y\oplus x))\oplus (y\oplus x) = x\oplus y\,.
\end{equation}

Now in \eqnref{9}, take $y = \neg(\neg(u\oplus z)\oplus (z\oplus u))$.
Then $\neg(\neg(u\oplus z)\oplus (z\oplus u)) =$
\begin{align*}
&= \neg[(\neg(u\oplus z)\oplus (z\oplus u))\oplus \neg(z\oplus u)]
\oplus \neg(z\oplus (u\oplus
\neg(\underbrace{\neg(\neg(u\oplus z)\oplus (z\oplus u))\oplus (z\oplus u)}))) \\
&\by{43}  \neg[(\neg(u\oplus z)\oplus (z\oplus u))\oplus \neg(z\oplus u)]
\oplus \neg(\underbrace{z\oplus (u\oplus \neg(u\oplus z))}) \\
&\by{42}  \neg[(\neg(u\oplus z)\oplus (z\oplus u))\oplus \neg(z\oplus u)]
\oplus \underbrace{\neg\neg 0} \\
&\byx{(A4)}  \neg[(\neg(u\oplus z)\oplus (z\oplus u))\oplus \neg(z\oplus u)]
\oplus 0 \\
&\byx{(A3)}  \neg[(\neg(u\oplus z)\oplus (z\oplus u))\oplus \neg(z\oplus u)]\,.
\end{align*}
Now apply (A4) to each side, and replace $u$ with $x$ and $z$ with $y$ to obtain
\begin{equation}
\eqnlabel{44}
(\neg(x\oplus y)\oplus (y\oplus x))\oplus \neg(y\oplus x) =
\neg(x\oplus y)\oplus (y\oplus x)\,.
\end{equation}

Next, in \eqnref{41}, set $x = v\oplus u$ and $y = \neg(u\oplus v)\oplus (v\oplus u)$.
The left side becomes
\[
(v\oplus u)\oplus \neg [\neg(\neg(u\oplus v)\oplus (v\oplus u))\oplus (v\oplus u)]
\by{43} (v\oplus u)\oplus \neg (u\oplus v)\,.
\]
The right side of \eqnref{41} becomes
\begin{align*}
&[\neg (u\oplus v)\oplus (v\oplus u)]\oplus
\neg (\underbrace{[\neg(u\oplus v)\oplus (v\oplus u)]\oplus \neg(v\oplus u)}) \\
&\by{44} [\neg (u\oplus v)\oplus (v\oplus u)]\oplus \neg [\neg (u\oplus v)\oplus (v\oplus u)] \\
&\byx{(A4)} \neg\neg\{[\neg (u\oplus v)\oplus (v\oplus u)]\oplus \neg [\neg (u\oplus v)\oplus (v\oplus u)]\} \\
&\by{zero} \neg 0\,.
\end{align*}
Thus, we have
\begin{equation}
\eqnlabel{45}
(v\oplus u)\oplus \neg (u\oplus v) = \neg 0\,.
\end{equation}

Next, in \eqnref{42}, set $x = \neg(u\oplus v)$ and $z = v\oplus u$, and reverse to obtain
\begin{align*}
\neg 0 &= \neg (u\oplus v) \oplus [(v\oplus u)\oplus \neg(\underbrace{(v\oplus u)\oplus \neg(u\oplus v)})] \\
&\by{45} \neg (u\oplus v) \oplus [(v\oplus u)\oplus \underbrace{\neg\neg 0}]\\
&\byx{(A4)} \neg (u\oplus v) \oplus [\underbrace{(v\oplus u)\oplus 0}] \\
&\byx{(A3)} \neg (u\oplus v) \oplus (v\oplus u)\,,
\end{align*}
that is,
\begin{equation}
\eqnlabel{47}
\neg (u\oplus v) \oplus (v\oplus u) = \neg 0\,.
\end{equation}

Now we apply \eqnref{47} to \eqnref{43} to get
\[
x\oplus y = \underbrace{\neg \neg 0}\oplus (y \oplus x)
\byx{(A4)} 0\oplus (y\oplus x) \by{A3a} y\oplus x\,.
\]
We have therefore established the commutativity of $\oplus$, that is, (A2).

Applying (A2) to the left side of \eqnref{41} once and the right side twice, we
obtain (A6). All that remains is to establish associativity (A1).

Adding $u\oplus (\neg(x\oplus (y\oplus u)))$ to the left on both sides of \eqnref{13}
and reversing, we get
\begin{align*}
&(u\oplus (\neg(x\oplus (y\oplus u))))\oplus (x\oplus y) \\
&= (u\oplus (\neg(x\oplus (y\oplus u)))) \oplus
[\underbrace{\neg(\neg(x\oplus y)\oplus \neg u) \oplus \neg(u\oplus (\neg(x\oplus (y\oplus u))))}] \\
&\byx{(A2)} (u\oplus (\neg(x\oplus (y\oplus u)))) \oplus
[\neg(u\oplus (\neg(x\oplus (y\oplus u)))) \oplus \neg(\neg(x\oplus y)\oplus \neg u)] \\
&= \neg e_{u\oplus (\neg(x\oplus (y\oplus u))),\neg(\neg(x\oplus y)\oplus \neg u)} \\
&\by{zero} \neg 0\,.
\end{align*}
Replacing $u$ with $z$, this gives
\[
(z\oplus (\neg(x\oplus (y\oplus z))))\oplus (x\oplus y) = \neg 0\,.
\]
Rearranging this using (A2), we have
\begin{equation}
\eqnlabel{almass}
(x\oplus y)\oplus (z\oplus (\neg(x\oplus (y\oplus z)))) = \neg 0\,.
\end{equation}

In \eqnref{almass}, take $x = v\oplus u$ and $z = \neg(v\oplus (u\oplus y))$.
Then $x\oplus (y\oplus z) = \neg 0$ by \eqnref{almass}, and so with the new
variables, \eqnref{almass} reversed becomes
\begin{align*}
\neg 0 &= ((v\oplus u)\oplus y)\oplus (\neg(v\oplus (u\oplus y))\oplus \underbrace{\neg\neg 0}) \\
&\byx{(A4)} ((v\oplus u)\oplus y)\oplus (\underbrace{\neg(v\oplus (u\oplus y))\oplus 0}) \\
&\byx{(A3)} ((v\oplus u)\oplus y)\oplus \neg(v\oplus (u\oplus y))\,.
\end{align*}
Replacing $v$ with $x$, $u$ with $y$ and $y$ with $z$, this gives
\begin{equation}
\eqnlabel{close}
((x\oplus y)\oplus z)\oplus \neg(x\oplus (y\oplus z)) = \neg 0\,.
\end{equation}

Set $x = (u\oplus v)\oplus w$ and $y = u\oplus (v\oplus w)$. Then
\[
x\oplus \neg y = ((u\oplus v)\oplus w)\oplus \neg (u\oplus (v\oplus w)) \by{close} \neg 0\,,
\]
and also
\begin{align*}
y\oplus \neg x &= (\underbrace{u\oplus (v\oplus w)})\oplus \neg (\underbrace{(u\oplus v)\oplus w}) \\
&\byx{(A2)} ((\underbrace{v\oplus w})\oplus u)\oplus \neg  (w\oplus (u\oplus v)) \\
&\byx{(A2)} ((v\oplus w)\oplus u)\oplus \neg  (w\oplus (u\oplus v)) \\
&\by{close} \neg 0\,.
\end{align*}
Now using (A2), rewrite (A6) as $x\oplus \neg (x\oplus \neg y) = y\oplus \neg (y\oplus \neg x)$.
With $x$ and $y$ as above, this is
\[
[(u\oplus v)\oplus w]\oplus \neg\neg 0 = (u\oplus (v\oplus w))\oplus \neg\neg 0\,.
\]
Applying (A4) to both sides followed by (A3), we have $(u\oplus v)\oplus w =
u\oplus (v\oplus w)$, that is, we have associativity (A1).
\end{proof}

Putting Lemmas \lemref{M1} and \lemref{M2} together, we almost have Theorem \thmref{MV}.
All that remains is to check the independence of (M1) and (M2). We just give the models,
leaving the detailed verifications to the reader.

On a $2$-element set $\{0,1\}$, define
$x\oplus 0 = 0\oplus x = 0$ for all $x$, $1\oplus 1 = 1$ and $\neg x = 1$ for all $x$.
This model satisfies (M1), but not (M2).

On a $2$-element set $\{0,1\}$, define $x \oplus y = 1$ and $\neg x = 0$ for all $x,y$.
This model satisfies (M2), but not (M1).

\begin{remark}
Note that \eqnref{9} and \eqnref{13} were the only direct consequences of (M2) used
in the proof. Thus we have also shown that (M1), \eqnref{9} and \eqnref{13} is a
$3$-base for MV-algebras.
\end{remark}

\section{Commutative BCK-Algebras}
\seclabel{CBCK}

In this section we prove Theorem \thmref{BCK}. We start with the easy direction.

\begin{lemma}
\lemlabel{C1}
Every commutative BCK-algebra satisfies \emph{(C1)} and \emph{(C2)}.
\end{lemma}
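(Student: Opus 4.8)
The plan is short, since both identities are immediate consequences of Yutani's basis (B1)--(B4). For (C1) I would simply observe that $(x\bck x)\bck y \byx{(B3)} 1\bck y \byx{(B4)} y$; there is nothing else to do.

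For (C2) the idea is to use the exchange axiom (B2), written as $a\bck(b\bck c) = b\bck(a\bck c)$, to push the outer subtractions inward, then apply the commutativity-type identity (B1) to the resulting inner term, and finally push back out with (B2) again. Concretely, I would first rewrite $(x\bck y)\bck(z\bck y)$ by (B2) (matching $a = x\bck y$, $b = z$, $c = y$) as $z\bck((x\bck y)\bck y)$; then replace $(x\bck y)\bck y$ by $(y\bck x)\bck x$ via (B1), getting $z\bck((y\bck x)\bck x)$; and finally apply (B2) once more (now $b = z$, $a = y\bck x$, $c = x$) to obtain $(y\bck x)\bck(z\bck x)$, which is exactly the right-hand side of (C2). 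So the whole argument is the chain
\[
(x\bck y)\bck(z\bck y) \byx{(B2)} z\bck((x\bck y)\bck y) \byx{(B1)} z\bck((y\bck x)\bck x) \byx{(B2)} (y\bck x)\bck(z\bck x)\,.
\]

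I do not expect any real obstacle here: the only thing to be careful about is the bookkeeping, namely matching the three variables of (B2) correctly in its two applications and using (B1) in the orientation shown. No auxiliary facts about commutative BCK-algebras are needed, since every step is a direct substitution instance of one of the axioms. I would present the proof as the one-line derivation of (C1) followed by the displayed three-step chain for (C2).
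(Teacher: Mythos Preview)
Your proof is correct and is essentially identical to the paper's own proof: (C1) from (B3) then (B4), and (C2) via the same three-step chain $(x\bck y)\bck(z\bck y) \byx{(B2)} z\bck((x\bck y)\bck y) \byx{(B1)} z\bck((y\bck x)\bck x) \byx{(B2)} (y\bck x)\bck(z\bck x)$.
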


\begin{proof}
(C1) follows immediately from (B3) and (B4). For (C2), we have
\begin{align*}
(x \bck y) \bck (z \bck y) &\byx{(B2)} z\bck ((x\bck y)\bck y) \\
&\byx{(B1)} z\bck ((y\bck x)\bck x) \\
&\byx{(B2)} (y\bck x)\bck (z\bck x)\,,
\end{align*}
as claimed.
\end{proof}

\begin{lemma}
\lemlabel{C2}
Let $(A,\bck)$ be an algebra satisfying \emph{(C1)} and \emph{(C2)}.
Then $(A,\bck)$ is a commutative BCK-algebra.
\end{lemma}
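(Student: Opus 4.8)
The plan is to derive (B1), (B2) and the constant-free form $x\bck x = y\bck y$ of (B3) from (C1) and (C2); since (C1) is already the constant-free form of (B4), this shows that $(A,\bck)$ is a commutative BCK-algebra of type $\langle 2\rangle$.

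The first step is to show that $x\bck x$ is a constant. Writing $e_t := t\bck t$, note that (C1) gives $e_t\bck z = z$ for all $t,z$; in particular $e_t\bck e_s = e_s$ and $e_{e_t} = e_t$. Specializing $z := x$ in (C2) [resp.\ $z := y$] yields $e_{x\bck y} = (y\bck x)\bck e_x$ [resp.\ $(x\bck y)\bck e_y = e_{y\bck x}$]. Substituting $y := e_w$ in the latter and simplifying with (C1) gives $(x\bck e_w)\bck e_w = e_x$; then substituting $x := e_v$ collapses the left side to $e_w$ and the right side to $e_v$, so $e_v = e_w$ for all $v,w$. Call the common value $1$; then (C1) reads $1\bck y = y$ (this is (B4)) and $x\bck x = 1$ (this is (B3)). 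Since every element $z$ equals $1\bck z$, hence has the form $x\bck y$, the identity $(x\bck y)\bck 1 = e_{y\bck x} = 1$ obtained above gives $z\bck 1 = 1$ for all $z$.

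Next I would extract a few consequences of (C2). Setting $z := 1$ in (C2) and using $1\bck y = y$ gives (B1) at once: $(x\bck y)\bck y = (y\bck x)\bck x$. Setting $y := 1$ in (C2) and using $x\bck 1 = z\bck 1 = 1$ together with $1\bck x = x$ gives $x\bck(z\bck x) = 1$; call this (E1). Substituting $y := u\bck x$ in (C2), simplifying the left side with (E1) and (B4) and the right side with (B1), yields (after relabeling) $x\bck(y\bck z) = ((z\bck y)\bck y)\bck(x\bck z)$; call this (E2) --- note that only (B4), (B1) and (E1) are used here, so there is no circularity with (B2). Finally, putting $x := 1$ in (E2) and using (B4) gives $y\bck z = ((z\bck y)\bck y)\bck z$, equivalently (via (B1)) $(p\bck z)\bck z = p$ whenever $p$ has the form $y\bck z$; call this (E3).

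It remains to prove (B2), which is the heart of the argument. By (E2) together with (B1), writing $a := x\bck z$ and $b := y\bck z$, the two sides of (B2) become $x\bck(y\bck z) = (b\bck z)\bck a$ and $y\bck(x\bck z) = (a\bck z)\bck b$, so it suffices to show $(b\bck z)\bck a = (a\bck z)\bck b$. Rewriting $a = (a\bck z)\bck z$ by (E3), applying (E2) with parameters $(b\bck z,\ a\bck z,\ z)$, then invoking (E1) in the form $z\bck(a\bck z) = 1$ together with (B4), and finally (E3) in the form $(b\bck z)\bck z = b$, transforms $(b\bck z)\bck a$ into $(a\bck z)\bck b$, completing the proof. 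I expect the main obstacle to be discovering exactly this chain: identifying (E2) as the right ``reduced'' form of (C2), and noticing that a second application of (E2), facilitated by the absorption identity (E1), is precisely what symmetrizes $(b\bck z)\bck a$ into $(a\bck z)\bck b$.
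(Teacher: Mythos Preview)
Your proof is correct. The overall scaffolding matches the paper's exactly: first show $x\bck x$ is constant (giving (B3) and (B4) via (C1)), then obtain (B1) by specializing $z:=1$ in (C2), then $x\bck 1=1$ and the absorption identity $x\bck(z\bck x)=1$ (your (E1), the paper's \eqnref{xyx1}), and finally (B2).

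The differences are in two sub-derivations. Your argument for constancy of $x\bck x$ goes through the specialization $(x\bck y)\bck e_y = e_{y\bck x}$ and then collapses by plugging in $e_v,e_w$; the paper instead writes a single equational chain from $x\bck x$ to $y\bck y$ using three applications of (C1) followed by one of (C2). More substantively, for (B2) the paper proves the ``near'' identity $(x\bck(y\bck z))\bck(y\bck(x\bck z))=1$ and then uses (B1) to turn the two one-sided equalities into the desired equality. You instead isolate the reduced form (E2), $x\bck(y\bck z)=((z\bck y)\bck y)\bck(x\bck z)$, and use a second application of (E2) (together with (E1) and (E3)) to symmetrize $(b\bck z)\bck a$ into $(a\bck z)\bck b$. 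Both routes are short and self-contained; your (E2) is arguably more reusable (it is exactly the identity \eqnref{almostdone} that the paper later derives from scratch in the {\L}BCK section), while the paper's route has the conceptual advantage of reducing (B2) to the antisymmetry principle implicit in (B1).
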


\begin{proof}
First, we show that $x\bck x$ is a constant, that is, $x\bck x = y\bck y$. Indeed,
\begin{align*}
x\bck x &\byx{(C1)} (y\bck y)\bck (x\bck x) \\
&\byx{(C1)} \underbrace{(x\bck x)}\bck [(y\bck y)\bck (x\bck x)] \\
&\byx{(C1)} [(y\bck y)\bck (x\bck x)]\bck [(y\bck y)\bck (x\bck x)] \\
&\byx{(C2)} [\underbrace{(x\bck x)\bck (y\bck y)}]\bck [\underbrace{(y\bck y)\bck (y\bck y)}] \\
&\byx{(C1)} (y\bck y)\bck (y\bck y) \\
&\byx{(C1)} y\bck y\,.
\end{align*}
We now define $1 = x\bck x$ and note that this definition and (C1) give (B3) and (B4).
(B1) then follows from taking $z = 1$ in (C2) and using (B4) on both sides.

To prove (B2), we need the following identities:
\begin{align}
\eqnlabel{x11}
x\bck 1 &= 1 \\
\eqnlabel{xyx1}
x\bck (y\bck x) &= 1\,.
\end{align}
For \eqnref{x11}, we compute
\begin{align*}
x\bck 1 &\byx{(B4)} (1\bck x)\bck 1 \\
&\byx{(B3)} (1\bck x)\bck (x\bck x) \\
&\byx{(C2)} (x\bck 1)\bck (x\bck 1) \byx{(B3)} 1\,.
\end{align*}
For \eqnref{xyx1}, we compute
\begin{align*}
x\bck (y\bck x) &\byx{(B4)} (1\bck x)\bck (y\bck x) \\
&\byx{(C2)} (x\bck 1)\bck (y\bck 1) \\
&\by{x11} 1\bck (y\bck 1) \\
&\by{x11} 1\bck 1 \\
&\byx{(B4)} 1\,.
\end{align*}

Next we show
\begin{equation}
\eqnlabel{near}
(x\bck (y\bck z))\bck (y\bck (x\bck z)) = 1\,.
\end{equation}
Indeed, we have
\begin{align*}
(\underbrace{x\bck (y\bck z)})\bck (y\bck (x\bck z)) &\byx{(B4)} [\underbrace{1}\bck (x\bck (y\bck z))]\bck (y\bck (x\bck z)) \\
&\by{xyx1} [\underbrace{(z\bck (y\bck z))\bck (x\bck (y\bck z))}]\bck (y\bck (x\bck z)) \\
&\byx{(C2)} [((y\bck z)\bck z)\bck (x\bck z)]\bck (y\bck (x\bck z)) \\
&\byx{(C2)} [(x\bck z)\bck ((y\bck z)\bck z)]\bck [y\bck (\underbrace{(y\bck z)\bck z})] \\
&\byx{(B1)} [(x\bck z)\bck ((y\bck z)\bck z)]\bck [y\bck ((z\bck y)\bck y)] \\
&\by{xyx1} [(x\bck z)\bck ((y\bck z)\bck z)]\bck 1 \\
&\by{x11} 1\,.
\end{align*}

Finally, we prove (B2) as follows:
\begin{align*}
x\bck (y\bck z) &\byx{(B4)} \underbrace{1}\bck (x\bck (y\bck z)) \\
&\by{near} [(y\bck (x\bck z))\bck (x\bck (y\bck z))]\bck (x\bck (y\bck z)) \\
&\byx{(B1)} [\underbrace{(x\bck (y\bck z))\bck (y\bck (x\bck z))}]\bck (y\bck (x\bck z)) \\
&\by{near} 1\bck (y\bck (x\bck z)) \\
&\byx{(B4)} y\bck (x\bck z)\,.
\end{align*}
This completes the proof of the lemma.
\end{proof}

From Lemmas \lemref{C1} and \lemref{C2}, we almost have Theorem \thmref{BCK}, modulo checking the
independence of (C1) and (C2). As before, we just give the models, leaving the details to the
reader.

On a $2$-element set $\{0,1\}$, define $x\bck 0 = 0$ and $x\bck 1 = 1$ for all $x$. This model
satisfies (C1), but not (C2).

On a $2$-element set $\{0,1\}$, define $x\bck y = 1$ for all $x,y$. This model
satisfies (C2), but not (C1).

\section{{\L}BCK-algebras}
\seclabel{LBCK}

In this section we prove Theorem \thmref{LBCK}. As usual, we start with the easy direction.

\begin{lemma}
\lemlabel{lbck1}
Every {\L}BCK-algebra satisfies \emph{(L1)} and \emph{(L2)}.
\end{lemma}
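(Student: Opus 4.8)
Since an {\L}BCK-algebra is in particular a commutative BCK-algebra, (L1) is just (C1) and holds by Lemma~\lemref{C1}; all the work is in (L2). The plan is to derive (L2) from the basis (B1)--(B5), using also what is already available for commutative BCK-algebras: the identity (C2) from Lemma~\lemref{C1}, the identities \eqnref{x11} and \eqnref{xyx1}, the join $a\vee b=(a\bck b)\bck b$ (commutative, associative, idempotent, with top $1$) from \S\secref{CBCK}, and the fact that $a\le b$ (meaning $a\bck b=1$) is exactly the semilattice order and hence antisymmetric.

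The first move is to simplify the right-hand side: by (C2), $(x\bck z)\bck(y\bck z)=(z\bck x)\bck(y\bck x)$, so (L2) is equivalent to $((x\bck y)\bck(z\bck x))\bck(y\bck x)=(z\bck x)\bck(y\bck x)$. Abbreviating $a=x\bck y$, $b=y\bck x$, $c=z\bck x$, this reads $(a\bck c)\bck b=c\bck b$, and I would prove it by showing $(a\bck c)\bck b\le c\bck b$ and the reverse inequality and appealing to antisymmetry. The forward inequality is routine commutative-BCK bookkeeping: by (C2), $((a\bck c)\bck b)\bck(c\bck b)=(b\bck(a\bck c))\bck(c\bck(a\bck c))$, whose right factor is $1$ by \eqnref{xyx1}, so the whole expression equals $1$ by \eqnref{x11}.

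The reverse inequality $c\bck b\le(a\bck c)\bck b$ is the only place (B5) is needed, and the idea is to use it exactly once, in the form it distils to here: $(x\bck y)\bck(y\bck x)=y\bck x$ says $a\bck b=b$, whence $a\vee b=(a\bck b)\bck b=b\bck b=1$. Now, by (B2), $(c\bck b)\bck((a\bck c)\bck b)=(a\bck c)\bck((c\bck b)\bck b)=(a\bck c)\bck(b\vee c)$, so it remains to see $a\bck c\le b\vee c$. For this: since $b\vee c=(b\bck c)\bck c$, (B2) gives $a\bck(b\vee c)=(b\bck c)\bck(a\bck c)$, so $a\bck c\le a\bck(b\vee c)$ by \eqnref{xyx1}; moreover $a\vee(b\vee c)=(a\vee b)\vee c=1$, which together with \eqnref{xyx1} and antisymmetry forces $a\bck(b\vee c)=b\vee c$; hence $a\bck c\le a\bck(b\vee c)=b\vee c$, as required. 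Undoing the abbreviations then gives (L2).

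I expect the main obstacle to be purely one of navigation rather than of calculation: with only (B1)--(B5), (C2), \eqnref{x11}, \eqnref{xyx1} and $(a\bck b)\bck b=a\vee b$ in hand it is easy to push these around in circles, and the decisive point is that (B5) must be invoked once, as ``$(x\bck y)\vee(y\bck x)=1$'', after which the crucial inequality $a\bck c\le b\vee c$ falls out of \eqnref{xyx1} applied twice. As a sanity check and an alternative route, one may instead note that the variety of {\L}BCK-algebras is generated by the single algebra $([0,1],\bck)$ with $a\bck b=\min(1,1-a+b)$ --- {\L}BCK-algebras being the $\bck$-subreducts of bounded commutative BCK-algebras, which are term equivalent to MV-algebras, and the latter generated by $[0,1]$ --- so it would suffice to verify (L1) and (L2) in that one algebra, by a finite case analysis on the relative order of $x$, $y$ and $z$.
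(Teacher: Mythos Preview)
Your argument is correct. The overall shape---reduce (L2) via (C2) to an equality of the form $(a\bck c)\bck b=c\bck b$ with $a=x\bck y$, $b=y\bck x$, and then obtain it by proving the two inequalities separately and invoking antisymmetry---matches the paper's strategy: the paper also derives the two ``$=1$'' statements \eqnref{Ltmp2} and \eqnref{Ltmp3} and combines them via (B1)/(B4), which is exactly antisymmetry written equationally. The genuine difference is in how each side handles the nontrivial inequality. You work order-theoretically: (B5) is invoked once as $(x\bck y)\vee(y\bck x)=1$, and then associativity of the join (stated in the Introduction, not in \S\secref{CBCK}) plus two applications of \eqnref{xyx1} yield $a\bck c\le b\vee c$. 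The paper instead stays purely equational: it uses (B5) inside the auxiliary identity \eqnref{Ltmp1}, proves the stronger intermediate \eqnref{Ltmp4} with a free $z$, and only at the end substitutes $z\mapsto z\bck x$. Your route is shorter and more conceptual, but it imports the semilattice structure as a black box; the paper's route is self-contained within (B1)--(B5), (C2), \eqnref{x11}, \eqnref{xyx1} and yields the extra identity \eqnref{Ltmp4} along the way. Your alternative sanity check via the generator $[0,1]$ is also valid, though it leans on the Mundici equivalence and Chang completeness, which are heavier than anything the paper uses.
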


\begin{proof}
We will use not only (B1)--(B4), but also identities derived in \S\secref{CBCK}.
First, (L1) is just (C1), so Lemma \lemref{C1} applies.

To obtain (L2) will require more work. First, we show
\begin{equation}
\eqnlabel{L25}
(x\bck y)\bck [((z\bck (y\bck x))\bck x)\bck x] = 1\,.
\end{equation}
Indeed, we have
\begin{align*}
&(x\bck y)\bck [((z\bck (y\bck x))\bck x)\bck y] \\
&\byx{(C2)} (y\bck x)\bck [((z\bck (y\bck x))\bck x)\bck x] \\
&\byx{(B1)} (y\bck x)\bck [(x\bck (z\bck (y\bck x)))\bck (z\bck (y\bck x))] \\
&\byx{(B2)} (x\bck (z\bck (y\bck x)))\bck [(y\bck x)\bck (z\bck (y\bck x))] \\
&\by{xyx1} (x\bck (z\bck (y\bck x)))\bck 1 \\
&\by{x11} 1\,.
\end{align*}

Next, we show
\begin{equation}
\eqnlabel{Ltmp1}
(((x\bck y)\bck z)\bck ((y\bck x)\bck z))\bck z = (x\bck y)\bck z\,.
\end{equation}
Set $a = ((x\bck y)\bck z)\bck ((y\bck x)\bck z)$ and $b = a\bck z$.
We compute
\begin{align*}
b &\byx{(C2)} ((z\bck (x\bck y))\bck (\underbrace{(y\bck x)\bck (x\bck y)}))\bck z \\
&\byx{(B5)} (\underbrace{(z\bck (x\bck y))\bck (x\bck y)})\bck z \\
&\byx{(B1)} (((x\bck y)\bck z)\bck z)\bck z \\
&\byx{(B1)} (\underbrace{z\bck ((x\bck y)\bck z)})\bck ((x\bck y)\bck z) \\
&\by{xyx1} 1\bck ((x\bck y)\bck z) \\
&\byx{(B4)} (x\bck y)\bck z\,.
\end{align*}

Now
\begin{align*}
(z\bck (y\bck x))\bck (b\bck (y\bck x)) &\byx{(C2)} ((y\bck x)\bck z)\bck (b\bck z) \\
&= ((y\bck x)\bck z)\bck (\underbrace{(a\bck z)\bck z}) \\
&\byx{(B1)} ((y\bck x)\bck z)\bck ((z\bck a)\bck a) \\
&\byx{(B2)} (z\bck a)\bck (\underbrace{((y\bck x)\bck z)\bck a}) \\
&\by{xyx1} (z\bck a)\bck 1 \\
&\by{x11} 1\,,
\end{align*}
that is,
\begin{equation}
\eqnlabel{Ltmp2}
(z\bck (y\bck x))\bck (b\bck (y\bck x)) = 1\,.
\end{equation}

Also,
\begin{align*}
(b\bck (y\bck x))\bck (z\bck (y\bck x)) &\byx{(C2)} ((y\bck x)\bck b)\bck (\underbrace{z\bck b}) \\
&\by{xyx1} ((y\bck x)\bck b)\bck 1 \\
&\by{x11} 1\,,
\end{align*}
that is,
\begin{equation}
\eqnlabel{Ltmp3}
(b\bck (y\bck x))\bck (z\bck (y\bck x)) = 1\,.
\end{equation}

Putting this together, we compute
\begin{align*}
((x\bck y)\bck z)\bck (y\bck x) &\by{Ltmp1} b\bck (y\bck x) \\
&\byx{(B4)} 1\bck (b\bck (y\bck x)) \\
&\by{Ltmp2} [(z\bck (y\bck x))\bck (b\bck (y\bck x))]\bck (b\bck (y\bck x)) \\
&\byx{(B1)} [(b\bck (y\bck x))\bck (z\bck (y\bck x))]\bck (z\bck (y\bck x)) \\
&\by{Ltmp3} 1\bck (z\bck (y\bck x)) \\
&\byx{(B4)} z\bck (y\bck x)\,,
\end{align*}
giving us
\begin{equation}
\eqnlabel{Ltmp4}
((x\bck y)\bck z)\bck (y\bck x) = z\bck (y\bck x)\,.
\end{equation}

Finally, in \eqnref{Ltmp4}, replace $z$ with $z\bck x$ to get
\[
((x\bck y)\bck (z\bck x))\bck (y\bck x) = (z\bck x)\bck (y\bck x)
\byx{(C2)} (x\bck z)\bck (y\bck z)\,.
\]
This establishes (L2).
\end{proof}

\begin{lemma}
\lemlabel{lbck2}
Let $(A,\bck)$ be an algebra satisfying \emph{(L1)} and \emph{(L2)}.
Then $(A,\bck)$ is an {\L}BCK-algebra.
\end{lemma}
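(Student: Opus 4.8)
The plan is to show that an algebra $(A,\bck)$ satisfying (L1) and (L2) is a commutative BCK-algebra satisfying (B5), since by Theorem~\thmref{BCK} the identities (C1), (C2), (B5) axiomatize {\L}BCK-algebras and (L1) is (C1). So the real work is to derive (C2) and (B5) from (L1) and (L2). First I would exploit (L1) exactly as in the proof of Lemma~\lemref{C2}: (L1) says $(x\bck x)\bck y = y$, so $x\bck x$ behaves as a left identity, and the same computation as in Lemma~\lemref{C2} (which only used (C1) and one further identity) should show that $x\bck x$ is in fact a constant; we set $1 = x\bck x$, giving (B3) and (B4) immediately. The difficulty is that the argument in Lemma~\lemref{C2} used (C2) to prove constancy of $x\bck x$, so here I must instead feed suitable substitutions into (L2) to get the analogue. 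A natural first move is to substitute $x = 1$ (once $1$ is available) or more precisely $x = y\bck y$-type terms into (L2) and simplify the many subterms $z\bck(\text{something})$ that collapse via the left-identity property; experience with these problems (and the pattern of Lemma~\lemref{lbck1}) suggests that (L2) with well-chosen substitutions yields short consequences such as $x\bck 1 = 1$, $x\bck(y\bck x) = 1$, and eventually (C2) and then (B5).

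Concretely, the key steps in order would be: (1) from (L1), establish that $x\bck x = y\bck y =: 1$ and hence (B3), (B4); (2) derive the auxiliary identities $x\bck 1 = 1$ and $x\bck(y\bck x) = 1$ by substituting into (L2) (these are the workhorses throughout \S\secref{CBCK} and \S\secref{LBCK}); (3) use these to derive (C2), i.e.\ $(x\bck y)\bck(z\bck y) = (y\bck x)\bck(z\bck x)$ — I expect this to come from (L2) by choosing the substitution that makes the inner subterms $z\bck x$ and $(x\bck y)\bck(z\bck x)$ line up so that the left-hand side of (L2) reduces to $(x\bck z)\bck(y\bck z)$ while being recognizably symmetric; (4) having (C1) and (C2), invoke Lemma~\lemref{C2} to conclude $(A,\bck)$ is a commutative BCK-algebra, so (B1), (B2) and all the derived identities of \S\secref{CBCK} are now freely available; (5) finally derive (B5), $(x\bck y)\bck(y\bck x) = y\bck x$, from (L2) using the BCK machinery — reading (L2) in reverse and specializing $z$ (for instance $z = x$ or $z = y$) so the right-hand side $(x\bck z)\bck(y\bck z)$ degenerates via $x\bck x = 1$ and $x\bck(y\bck x)=1$, leaving $(x\bck y)\bck(y\bck x)$ on one side and $y\bck x$ on the other.

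The main obstacle will be step (3): getting (C2) out of (L2) before one has associativity-type identities (B1), (B2) available. In (L2) the left side is $((x\bck y)\bck(z\bck x))\bck(y\bck x)$, a triple composition with a nontrivial nesting, and with only (L1) and its immediate consequences in hand, one cannot yet rearrange subterms at will. The trick will presumably be to choose the substituted values so that $(x\bck y)\bck(z\bck x)$ or $(z\bck x)$ itself becomes a left identity $1$ (using $u\bck(v\bck u)=1$ or $u\bck u = 1$), thereby cancelling one layer of the composition; the symmetry needed for (C2) should then emerge by comparing (L2) with the instance of (L2) obtained by swapping the roles of $x$ and $z$, or of $x$ and $y$. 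Once (C2) is in hand the rest is routine given Lemmas~\lemref{C1}, \lemref{C2} and the identities of \S\secref{CBCK}. As in the previous two theorems, independence of (L1) and (L2) would be settled by exhibiting two small models, which I would leave to the reader.
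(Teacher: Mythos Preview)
Your high-level strategy---derive (C2) and (B5), then invoke Lemma~\lemref{C2}---is sound in principle, but the proposal never actually carries out the two hard steps, and your proposed ordering makes them harder than necessary. You plan to establish constancy of $x\bck x$ \emph{first}, but the argument from Lemma~\lemref{C2} that you cite uses (C2), which you do not yet have; you acknowledge this and say you will substitute into (L2) instead, but give no concrete derivation. Likewise, you correctly flag step~(3), the derivation of (C2), as the main obstacle, and then offer only a heuristic. Since (C2) is essentially equivalent to (B1)${}+{}$(B2) in the presence of (B3) and (B4) (compare Lemmas~\lemref{C1} and~\lemref{C2}), deriving (C2) directly is no easier than deriving the Yutani axioms outright, so routing through Lemma~\lemref{C2} does not actually save work.

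The paper's route differs in a crucial way: it derives (B1) \emph{before} constancy, using only (L1), (L2), and the preliminary identity $[x\bck(x\bck x)]\bck y = y$. With (B1) in hand, constancy of $x\bck x$ follows by a short computation, after which $1$, (B3), (B4), the auxiliaries $x\bck 1 = 1$ and $x\bck(y\bck x) = 1$, then (B5), and finally (B2) are obtained in that order---never passing through (C2) at all. The key insight you are missing is that (B1) is cheap: a single instance of (L2) with $y$ specialized to $x\bck x$ produces the factor $x\bck(x\bck x)$, which acts as a left identity by the preliminary identity, collapsing the left side of (L2) to $(y\bck x)\bck x$ while the right side collapses via (L1) to $(x\bck y)\bck y$. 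Once you see this, the natural ordering is (B1) $\to$ constancy $\to$ auxiliaries $\to$ (B5) $\to$ (B2), and (C2) never needs to appear.
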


\begin{proof}
First, we establish
\begin{equation}
\eqnlabel{xxxy}
[x\bck (x\bck x)]\bck y = y
\end{equation}
by computing
\begin{align*}
[x\bck (x\bck x)]\bck y &\byx{(L1)} [((x\bck x)\bck x)\bck (x\bck x)]\bck y \\
&\byx{(L1)} [((x\bck x)\bck ((x\bck x)\bck x))\bck (x\bck x)]\bck y \\
&\byx{(L2)} [(x\bck (x\bck x))\bck (x\bck (x\bck x))]\bck y \\
&\byx{(L1)} y\,.
\end{align*}

Next we verify (B1), we compute
\begin{align*}
(x\bck y)\bck y &\byx{(L1)} (x\bck y)\bck ((x\bck x)\bck y) \\
&\byx{(L2)} (\underbrace{(x\bck (x\bck x))\bck (y\bck x)})\bck ((x\bck x)\bck x) \\
&\by{xxxy} (y\bck x)\bck (\underbrace{(x\bck x)\bck x}) \\
&\byx{(L1)} (y\bck x)\bck x\,,
\end{align*}
establishing the claim.

Our next goal is to show that the expression $x\bck x$ is a constant.
First, we have
\begin{align*}
(x\bck y)\bck (x\bck y) &\byx{(L1)} [(y\bck y)\bck (x\bck y)]\bck (x\bck y)] \\
&\byx{(B1)} [(x\bck y)\bck (y\bck y)]\bck (y\bck y) \\
&\byx{(L1)} [((y\bck y)\bck (x\bck y))\bck (y\bck y)]\bck (y\bck y) \\
&\byx{(L2)} [(y\bck x)\bck (y\bck x)]\bck (y\bck y) \\
&\byx{(L1)} y\bck y\,,
\end{align*}
that is,
\begin{equation}
\eqnlabel{xyxyyy}
(x\bck y)\bck (x\bck y) = y\bck y\,.
\end{equation}

Next,
\begin{align*}
x\bck x &\by{xyxyyy} (y\bck x)\bck (y\bck x) \\
&\by{xyxyyy} [\underbrace{(z\bck x)\bck (y\bck x)}]\bck [(z\bck x)\bck (y\bck x)] \\
&\byx{(L2)} [((z\bck y)\bck (x\bck z))\bck (y\bck z)]\bck [\underbrace{(z\bck x)\bck (y\bck x)}] \\
&\byx{(L2)} [((z\bck y)\bck (x\bck z))\bck (y\bck z)]\bck [((z\bck y)\bck (x\bck z))\bck (y\bck z)] \\
&\by{xyxyyy} (y\bck z)\bck (y\bck z) \\
&\by{xyxyyy} z\bck z\,,
\end{align*}
which shows that $x\bck x$ is a constant. We thus set
\[
x\bck x = 1\,.
\]
This is (B3), and then (L1) gives (B4).

Next,
\[
1 \byx{(B3)} (x\bck 1)\bck (x\bck 1) \byx{(B3)} (x\bck (x\bck x))\bck (x\bck 1) \by{xxxy} x\bck 1\,,
\]
which establishes \eqnref{x11}.

Next we prove \eqnref{xyx1} as follows:
\begin{align*}
x\bck (y\bck x) &\byx{(B4)} (1\bck x)\bck (y\bck x) \\
&\byx{(L2)} ((\underbrace{1\bck y})\bck (x\bck 1))\bck (y\bck 1) \\
&\byx{(B4)} (y\bck (\underbrace{x\bck 1}))\bck (y\bck 1) \\
&\by{x11} (y\bck 1)\bck (y\bck 1) \\
&\byx{(B3)} 1\,.
\end{align*}

Now we compute
\begin{align*}
(\underbrace{(x\bck y)\bck y})\bck x &\byx{(B1)} ((y\bck x)\bck x)\bck x \\
&\byx{(B1)} (\underbrace{x\bck (y\bck x)})\bck (y\bck x) \\
&\by{xyx1} 1\bck (y\bck x) \\
&\byx{(B4)} y\bck x\,,
\end{align*}
which shows
\begin{equation}
\eqnlabel{xyyxyx}
((x\bck y)\bck y)\bck x = y\bck x\,.
\end{equation}

Next we prove (B5):
\begin{align*}
(x\bck y)\bck (y\bck x) &\by{xyyxyx} [\underbrace{((x\bck y)\bck (y\bck x))\bck (y\bck x)}]\bck (x\bck y) \\
&\byx{(L2)} [(x\bck y)\bck (\underbrace{y\bck y})]\bck (x\bck y) \\
&\byx{(B3)} [\underbrace{(x\bck y)\bck 1}]\bck (x\bck y) \\
&\by{x11} 1\bck (x\bck y) \\
&\byx{(B4)} x\bck y\,.
\end{align*}

Now we compute
\begin{align*}
(\underbrace{(x\bck y)\bck y})\bck (z\bck x) &\byx{(B1)} ((y\bck x)\bck x)\bck (z\bck x) \\
&\byx{(L2)} [((x\bck y)\bck z)\bck (\underbrace{x\bck (y\bck x)})]\bck [z\bck (y\bck x)] \\
&\by{xyx1} [\underbrace{((x\bck y)\bck z)\bck 1}]\bck [z\bck (y\bck x)] \\
&\by{x11} 1\bck [z\bck (y\bck x)] \\
&\byx{(B4)} z\bck (y\bck x)\,,
\end{align*}
which shows
\begin{equation}
\eqnlabel{almostdone}
((x\bck y)\bck y)\bck (z\bck x) = z\bck (y\bck x)\,.
\end{equation}

Finally, we verify (B2) as follows:
\begin{align*}
x\bck (y\bck z) &\by{almostdone} ((z\bck y)\bck y)\bck (x\bck z) \\
&\by{almostdone} ((z\bck x)\bck x)\bck [\underbrace{((z\bck y)\bck y)\bck z}] \\
&\by{xyyxyx} ((z\bck y)\bck y)\bck (y\bck z) \\
&\by{almostdone} y\bck (x\bck z)\,.
\end{align*}
This completes the proof of the lemma.
\end{proof}

We have almost finished Theorem \thmref{LBCK} thanks to Lemmas \lemref{lbck1} and \lemref{lbck2}.
All that remains is to check the independence of (L1) and (L2). As before, we just give the
models.

On a $2$-element set $\{0,1\}$, define $x\bck 0 = 0$ and $x\bck 1 = 1$ for all $x$. This model
satisfies (L1), but not (L2).

On a $2$-element set $\{0,1\}$, define $x\bck y = 1$ for all $x,y$. This model
satisfies (L2), but not (L1).

\section{Problems}
\seclabel{problems}

We start with an obvious question.

\begin{problem}
Is the variety of MV-algebras $1$-based?
\end{problem}

Obviously the axiom (M2) is rather long and involves four variables. This
suggests the following.

\begin{problem}
\begin{enumerate}
\item Is there a $2$-base for MV-algebras with at most three variables?
\item Is there a $2$-base for MV-algebras with one axiom no longer than (M1)
and the other shorter than (M2)?
\end{enumerate}
\end{problem}

\begin{acknowledgment}
We are pleased to acknowledge the assistance of the automated deduction tool \Prover\
and the finite model builder \Mace, both developed by McCune \cite{McCune}.
We also used the computer algebra system GAP \cite{GAP}.
\end{acknowledgment}

\end{document}